\renewcommand{\bar}[1]{\mkern 2mu\overline{\mkern-2mu#1}}
\newtheorem{thm}{Theorem}
\newtheorem{prop}[thm]{Proposition}
\newtheorem{ex}[thm]{Example}
\newcommand{\R}{\mathbb{R}}
\newcommand{\C}{\mathbb{C}}
\newcommand{\tr}{\mathrm{Tr}}
\newcommand{\inner}[1]{\left<\smash{#1}\right>}
\begin{document}
\title[A dual formula for the spectral distance in ncg]{A dual formula for the spectral distance\\[5pt] in noncommutative geometry}
\author{Francesco D'Andrea, Pierre Martinetti}

\address[F.~D'Andrea]{
Universit\`a di Napoli ``Federico II'' and I.N.F.N. Sezione di Napoli, Complesso MSA, Via Cintia, 80126 Napoli, Italy}
\email{francesco.dandrea@unina.it}
\address[P.~Martinetti]{
Universit\`a di Genova (DIMA)  and I.N.F.N. Sezione di Genova,
\linebreak  via
Dodecaneso 35, 16146 Genova, Italy}
\email{martinetti@dima.unige.it}

\subjclass[2010]{58B34.}

\keywords{Spectral triples. Connes' distance. Beckmann's problem.}

\begin{abstract}
In noncommutative geometry, Connes's spectral distance
is an extended metric on the state space of a $C^*$-algebra generalizing  Kantorovich's dual formula of the Wasserstein
  distance of order $1$ from optimal transport. It is
  expressed as a supremum. We present a dual
  formula -   as an infimum -  generalizing Beckmann's ``dual of the
  dual'' formulation of the Wasserstein distance.
 We then discuss some examples with matrix algebras, where
    such a  dual formula may be useful to obtain upper bounds for the distance.
\end{abstract}

\maketitle


\section{Introduction}

Noncommutative geometry  \cite{Connes:1994kx} extends Gelfand
  duality (that holds between locally compact Hausdorff spaces and
commutative C*-algebras) to Riemannian geometry.
In his reconstruction theorem \cite{connesreconstruct}, Connes has shown that
  all the geometric informations of an oriented  closed Riemannian (spin) manifold $M$ are
  encoded within a suitable triple of algebraic data, 
\begin{equation}
\label{eq:7bis}
A:=C^\infty(M), \quad H:=L^2(M, E),\quad D:=c\circ\nabla^E
\end{equation}
where $E\to M$  is an Hermitian vector
bundle equipped with a unitary action $c$ of the Clifford algebra
bundle and a connection $\nabla^E$ compatible with the Riemannian
metric, and $C^\infty(M)$ is the (commutative)
algebra of complex-valued smooth 
functions on $M$ that acts by pointwise multiplication on the Hilbert
space $L^2(M,E)$ of square integrable sections of the bundle.
Typically, if one is interested in the Riemannian structure
only, then $E$ is the bundle of complex differential forms on
$M$, with $D$ the Hodge-de Rham operator;  or --- if one is interested
also in the spin structure in case $M$ is a spin
manifold --- then $E$ is the bundle of spinors  with $D$ the associated Dirac operator (see e.g.~\cite{GVF}).

The triple \eqref{eq:7bis} is the canonical example of a \emph{spectral triple},
consisting in an associative --- but not necessarily commutative --- complex $*$-algebra $A$ acting faithfully on a Hilbert
  space $H$ through a bounded representation $\pi$, together with a selfadjoint operator $D$ on $H$ such that $[D,\pi(a)]$ is bounded
  and $\pi(a)(1+D^2)^{-1}$ is a compact operator for
all $a\in A$.

The reconstruction theorem \cite{connesreconstruct}  shows that if a spectral triple $(A,
H, D)$, with $A$ unital and commutative, satisfies some
additional assumptions first listed in \cite{Connes:1996fu}, then
there exists a  closed oriented Riemannian 
manifold $M$ such that $A=C^\infty(M)$. A spin manifold is rebuilt
 if one adds more assumptions.
Inspired by this geometric example, one refers to a spectral triple where $A$ is non-commutative as a \emph{noncommutative geometry}.

In physics, there are strong theoretical motivations
  to believe that at small scale, the geometry of spacetime is not
  well captured by the notion of manifold. Geometry is expected to
  become ``noncommutative'' (or ``quantum'', in a more physical
  language) at very small scale. There exists many attempts to  describe
  such ``noncommutative
  spaces'' in the mathematical physics
  literature (see e.g.~\cite{Martinetti:2009kx} and reference within). But Connes' approach is the only one that
  provides a definition of a distance that coincides with the
  geodesic distance in the commutative case, and still makes sense in the
  noncommutative framework.\footnote{Often, ``noncommutative spaces''
  are intended as objects whose ``coordinates do
  not commute''. In the most elaborated of these models, one is able to
  define a length operator. However,  the interpretation of the spectrum of this
  operator in terms of distances between some noncommutative equivalent of points is
  far from obvious \cite{Martinetti:2011fk}.}

Namely, given a spectral triple $(A, H, D)$, the  spectral distance between any
two states $\varphi_1, \varphi_2$ of $\bar{A}$ (the closure of $A$
with respect to the operator norm on $H$)
is~\cite{Connes:1992bc}:
\begin{align}\label{eq:oneA}
d_D(\varphi_1,\varphi_2) &:=\sup_{a\in A}\big\{|\varphi_1(a)-\varphi_2(a)|:\|[D,\pi(a)]\|\leq 1\big\}.
\end{align}
It is not difficult to check that, for an arbitrary spectral triple, the formula \eqref{eq:oneA} defines
a distance 
(more precisely an \emph{extended metric}, since \eqref{eq:oneA} may be infinite) on the space $S(\bar{A})$ of states of $\bar A$.
In the commutative case~\eqref{eq:7bis}, the spectral distance \eqref{eq:oneA} between pure states of the algebra $\bar A=C(M)$ of continuous functions on $M$, which are in $1$-to-$1$ correspondence with the points of $M$, coincides with the geodesic distance.\footnote{If $M$ is not connected, one adopts the convention that points in different connected components are at infinite geodesic distance.}
The meaning of the spectral distance between non-pure states, in the
commutative case, was first noticed by Rieffel in \cite{Rieffel:1999wq}. He pointed out that \eqref{eq:oneA} applied to \eqref{eq:7bis} is nothing but Kantorovich dual formulation of the Wasserstein distance $W_1$ of order $1$ studied in optimal transport, with cost function given by the geodesic distance $d_\text{geo}$ on $M$.
States of $C(M)$ -- that is  positive, normalized,  linear functionals $\varphi$ on $C(M)$ -- are in $1$-to-$1$ correspondence
with probability measures $\mu$ on $M$,
\begin{equation}
  \label{eq:9}
  \varphi(f) =\int_M f \, d\mu \quad \forall f\in C(M).
\end{equation}
One then has 
\begin{equation}
   \label{eq:10}
d_D(\varphi_1, \varphi_2) =W_1 (\mu_1, \mu_2)
\end{equation}
with
\begin{equation}
  \label{eq:8}
  W_1 (\mu_1,\mu_2) := \inf_{\gamma} \;\int_{M\times M}  \!\!
	d_\text{geo}(x,y)d\gamma(x,y)
	\;,
\end{equation}
where the infimum is on all probability measures $\gamma$ on $M\times M$ with marginals the measures $\mu_1$ ands $\mu_2$ defining the states $\varphi_1, \varphi_2$ through \eqref{eq:9}.
Later \cite{dAndrea:2009xr}, we show that \eqref{eq:10} actually holds even for non-compact 
complete manifolds, with $C^\infty(M)$ in \eqref{eq:oneA} 
replaced by the algebra of compactly supported smooth functions on $M$, or of smooth Lipschitz functions on $M$ vanishing at infinity.

\medskip

Connes' formula
\eqref{eq:oneA} thus provides a generalization of
Kantorovich dual formula  to the noncommutative
framework. In the commutative case, \eqref{eq:8} provides a
``pre-dual'' formula. A natural question is whether there 
exists a similar formula in the noncommutative case, that is a noncommutative version of \eqref{eq:8}. In other
terms, given an arbitrary spectral triple $(A, H, D)$, is there a way to express the spectral distance \eqref{eq:oneA} as an
infimum rather than a supremum?

The interest is not just theoretical. In most of the explicit
computations of the spectral distance, one usually ``guesses'' a good
upper bound, then shows this is saturated. The difficulty is
that while any $a\in A$ satisfying the side inequality $\|[D,\pi(a)]\|\leq 1$ gives a lower bound for the spectral distance:
\begin{equation}
d_D(\varphi_1,\varphi_2)\geq |\varphi_1(a)-\varphi_2(a)|,
\label{eq:5}
\end{equation}
there is no easy way to obtain an upper bound. On the contrary, in the commutative case, any probability measure $\gamma$ on $M\times M$ with marginals $\mu_1$ and $\mu_2$ gives an upper bound for the Wasserstein distance:
\begin{equation}\label{eq:5b}
W_1(\mu_1,\mu_2)\leq \int_{M\times M}  \!\!d_\text{geo}(x,y)d\gamma(x,y) .
\end{equation}
Notice that if upper and  lower bounds in \eqref{eq:5} and \eqref{eq:5b} coincide, the distance
is actually computed.
What is missing in the non-commutative case is a formula expressing $d_D$ as an infimum, from
which one could easily get upper bounds.
\medskip

A first attempt in this direction starts with the following observation: on a manifold, the cost function is retrieved as the Wasserstein distance between pure states. On an arbitrary
spectral triple, replacing states by probability measures on the
space $\mathcal{P}(\bar{A})$ of pure states, a natural candidate for a pre-dual formula
would be the Wasserstein distance $W_D$ on $\mathcal{S}(\bar A)$,
with cost the spectral distance on pure states (but note that the map
from probability measures on $\mathcal{P}(\bar A)$ to $\mathcal{S}(\bar{A})$ is surjective but, in general, not injective).
One shows \cite{Martinetti:2012fk} that $d_D(\varphi_1,\varphi_2) \leq W_D(\varphi_1, \varphi_2)$ for any states $\varphi_1,\varphi_2$, with equality when $\varphi_1$ and $\varphi_2$ are convex linear combinations of the same two pure states. However the equality does not hold in general 
(see the counterexample in \cite{Martinetti:2017}).

A similar approach is in \cite[Sec.~4.6]{D15}:
on the Berezin quantization $A$ of a compact homogeneous $G$-space $M$, with $G$ a connected compact semisimple Lie group, the symbol map associates to every quantum state a unique probability measure on $M$; one can then consider the Wasserstein distance between probability measures and show that it gives a distance on quantum states that is dual to a suitable seminorm on $A$, in the spirit of \eqref{eq:oneA}.

\medskip

In this note, we show that there exists a dual formulation of the
spectral distance \eqref{eq:oneA} by adapting to noncommutative
geometry the ``dual of the dual'' formula of the Wasserstein distance, also known as Beckmann's problem (cf.~\cite[Sec.~4.2]{San15}).
This is inspired by the recent work \cite{chen2017} on matrix algebras. Our main result is Theorem \ref{thm:2},
which shows that the ``dual of the dual'' formula holds in full generality.

In the following, we will omit the representation symbol and identify
an element of the algebra $A$ of a spectral triple with its
representation as a bounded operator on $H$. By $A^+$ we shall mean
the subalgebra of $\mathcal{B}(H)$ generated by $A$ and $1$.

\section{A dual formulation for the spectral distance}

Let $(A, H, D)$ be a spectral triple, and
\begin{equation}
\Omega^1_D(A):=\mathrm{Span}\big\{ a[D,b] :a,b\in A^+
\big\}\label{eq:13}
\end{equation}
the
$A$-bimodule of generalized $1$-forms. Denote by $\nabla$ the derivation:
\begin{equation}\label{eq:nabla}
\nabla:A\to\Omega^1_D(A)\;,\qquad\nabla a:=[D,a] \;.
\end{equation}
By definition of a spectral triple, $[D,a]$ is bounded for any $a\in
A$, so that $\Omega^1_D(A)$, and therefore $\mathrm{Im}(\nabla)$, is a subset of the algebra
$\mathcal{B}(H)$ of bounded operators on $H$.

Let $B$ be any (complex) Banach subspace of $\mathcal{B}(H)$ containing $\mathrm{Im}(\nabla)$, $B^*$ its Banach dual, i.e.~the set of
linear functionals $\Phi: B\to \C$ that are bounded for the operator norm
\begin{equation}
  \label{eq:14}
  ||\Phi||:= \sup_{b\in B:b\neq 0} \frac{|\Phi(b)|}{||b||} \;,
\end{equation}
and $\mathcal{L}(A,\C)$ the set of linear maps $A\to\C$.
We denote by
\begin{equation}
\nabla^*: B^*\to \mathcal{L}(A,\C)
\end{equation}
the pullback of $\nabla$:
\begin{equation}
\nabla^*\Phi(a):=\Phi(\nabla a) \;,\qquad\forall\;\Phi\in B^*,a\in A.
\end{equation}
Given a state $\varphi$ of $\bar{A}$ we will denote by $\varphi_0$ its restriction to $A$.

\begin{thm}\label{thm:2}
Let $(A,H,D)$ be a spectral triple and, for any $\varphi,\psi\in S(\bar{A})$ at finite distance (i.e.~$d_D(\varphi,\psi)<\infty$)
and any Banach space $B$ containing $\text{Im}(\nabla)$, define:
\begin{equation}\label{eq:two}
W(\varphi,\psi):=\inf_{\Phi\in B^*}\big\{\|\Phi\|:\nabla^*\Phi=\varphi_0-\psi_0\big\} \;.
\end{equation}
Then:
(i) the above inf is well-defined (the set of $\Phi$ satisfying the side condition is not empty),
(ii) it is in fact a minimum and 
(iii) one has
\begin{equation}\label{eq:1}
W(\varphi,\psi)=d_D(\varphi,\psi)
\end{equation}
In particular, \eqref{eq:two} is independent of the choice of $B$.
\end{thm}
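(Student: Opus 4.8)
The plan is to recognize \eqref{eq:two} as the assertion that $d_D$ is the norm of the functional $\varphi_0-\psi_0$ computed \emph{through} the ``predual'' map $\nabla$, and to prove \eqref{eq:1} by a direct Hahn--Banach argument. Write $f:=\varphi_0-\psi_0\in\mathcal{L}(A,\C)$. Since $\|\nabla a\|=\|[D,a]\|$ is the operator norm, Connes' formula \eqref{eq:oneA} reads $d_D(\varphi,\psi)=\sup\{|f(a)|:a\in A,\ \|\nabla a\|\le 1\}$. The one point where a hypothesis is genuinely used is the observation that $f$ annihilates $\ker\nabla$: if $a\in A$ and $\nabla a=0$, then $a=\lambda 1$ by the connectedness assumption applied in $A^+$, and $f(\lambda 1)=\lambda\bigl(\varphi(1)-\psi(1)\bigr)=0$ because $\varphi,\psi$ are states of $\overline A$ (when $A$ is non-unital one gets $\lambda=0$ outright). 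It follows that $|f(a)|\le d_D(\varphi,\psi)\,\|\nabla a\|$ for every $a\in A$ (rescale $a$ when $\|\nabla a\|\neq 0$; both sides vanish otherwise).

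The inequality $d_D(\varphi,\psi)\le W(\varphi,\psi)$ is immediate and needs no extra assumption: for any $\Phi\in B^*$ with $\nabla^*\Phi=f$ and any $a\in A$ with $\|\nabla a\|\le 1$ one has $|f(a)|=|\Phi(\nabla a)|\le\|\Phi\|\,\|\nabla a\|\le\|\Phi\|$; taking the supremum over such $a$ yields $d_D(\varphi,\psi)\le\|\Phi\|$, and the infimum over $\Phi$ gives the claim (with the convention $\inf\emptyset=+\infty$, so this already forces $W=+\infty$ whenever $d_D=+\infty$).

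For the reverse inequality together with (i) and (ii), assume $d_D(\varphi,\psi)<\infty$. Define $\Phi_0$ on the linear subspace $\mathrm{Im}(\nabla)\subseteq B$ by $\Phi_0(\nabla a):=f(a)$; this is unambiguous precisely because $f$ kills $\ker\nabla$, it is $\C$-linear since $\nabla$ and $f$ are, and the estimate of the first paragraph gives $|\Phi_0(\nabla a)|\le d_D(\varphi,\psi)\,\|\nabla a\|$, i.e.\ $\Phi_0$ is bounded on $\mathrm{Im}(\nabla)$ (with the operator norm inherited from $B$) of norm $\le d_D(\varphi,\psi)$. By the complex Hahn--Banach theorem (dominated-extension form, so the bound is preserved) $\Phi_0$ extends to some $\Phi\in B^*$ with $\|\Phi\|\le d_D(\varphi,\psi)$ and $\Phi|_{\mathrm{Im}(\nabla)}=\Phi_0$, whence $\nabla^*\Phi=f$. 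This $\Phi$ is a competitor in \eqref{eq:two}, which proves (i), and it satisfies $W(\varphi,\psi)\le\|\Phi\|\le d_D(\varphi,\psi)$; combined with the previous paragraph this gives $W(\varphi,\psi)=d_D(\varphi,\psi)$, proving (iii). Since moreover $\|\Phi\|\le d_D=W\le\|\Phi\|$, this $\Phi$ attains the infimum, proving (ii). Independence of $B$ is then automatic, the right-hand side of \eqref{eq:1} not involving $B$.

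The only delicate step is the well-definedness (not the boundedness, which is exactly Connes' formula) of $\Phi_0$ on $\mathrm{Im}(\nabla)$: this is where one must invoke connectedness together with normalization of the states, and treat the non-unital case separately. Everything else is the standard ``dual of the dual'' mechanism. One should also decide how to phrase the $d_D=+\infty$ case: then the admissible set in \eqref{eq:two} is empty, both sides of \eqref{eq:1} equal $+\infty$, and the statement is to be read in $[0,+\infty]$ (or one simply restricts, as is natural, to pairs of states at finite spectral distance).
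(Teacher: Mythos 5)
Your proof is correct and follows essentially the same route as the paper: define $\Phi_0$ on $\mathrm{Im}(\nabla)$ by $\Phi_0(\nabla a)=\varphi_0(a)-\psi_0(a)$ (well-defined by connectedness plus normalization of the states), identify its norm with $d_D$, extend by Hahn--Banach to get an admissible $\Phi$ attaining the infimum, and obtain the reverse inequality by restricting any admissible $\Phi$ to $\mathrm{Im}(\nabla)$. Your extra care about the non-unital case and the $d_D=+\infty$ case (where the admissible set is empty and (i)--(ii) must be read for states at finite distance, as the paper does in Theorem \ref{thm:1bis}) is a welcome refinement but not a different argument.
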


\begin{proof}
Suppose $d_D(\varphi,\psi)<\infty$. Then $\varphi-\psi$ vanishes on $\ker\nabla$.
Indeed, let $b\in\ker\nabla$. Set $a:=\lambda b$, with $\lambda\in\C$. Since $[D,a]=0$, one has
$$
d_D(\varphi,\psi)\geq |(\varphi-\psi)(a)|=|\lambda|\cdot |(\varphi-\psi)(b)|\;\forall\;\lambda\in\C \;.
$$
If $(\varphi-\psi)(b)\neq 0$, the sup over all $\lambda$ is infinite and we get a contradiction.

Let $B_0:=\mathrm{Im}(\nabla)$ and $\Phi_0:B_0\to \C$ be the map given by:
\begin{equation}\label{eq:infmin}
\Phi_0(\nabla a):=\varphi_0(a)-\psi_0(a) \;,\qquad\forall\;a\in A.
\end{equation}
Since $\ker\nabla\subset\ker(\varphi_0-\psi_0)$, such a map is well defined.
Note that
\begin{equation}\label{eq:2}
\|\Phi_0\|:=\sup_{b\in B_0:b\neq 0}\frac{|\Phi_0(b)|}{\|b\|}=
\sup_{a\in A: [D,a]\neq 0}\frac{|\varphi(a)-\psi(a)|}{\|[D,a]\|}=
d_D(\varphi,\psi)
\end{equation}
is finite. By the Hahn-Banach theorem \cite[Pag.~77, Cor.~1]{RS80}, the map
$\Phi_0$ can be extended (non-uniquely) to a linear map in $B^*$ with
the same norm, which proves the first statement.

Let $\Phi\in B^*$ be any of these extensions. Since by construction
\begin{equation}
\nabla^*\Phi(a)=\Phi(\nabla a)=\Phi_0(\nabla a)=\varphi(a)-\psi(a)
\qquad \forall a\in A,
\end{equation}
we have from \eqref{eq:2}:
\begin{equation}
W(\varphi,\psi)\leq\|\Phi\|=\|\Phi_0\|=d_D(\varphi,\psi).
\label{eq:3}
\end{equation}
On the other hand, for all $\Phi\in B^*$,
\begin{equation}
\|\Phi\|=\sup_{b\in B:b\neq 0}\frac{|\Phi(b)|}{\|b\|}\geq
\sup_{b\in B_0:b\neq 0}\frac{|\Phi(b)|}{\|b\|}=
\sup_{a\in A:\nabla a\neq 0}\frac{|\nabla^*\Phi(a)|}{\|\nabla a\|}
 \;.
\end{equation}
In particular if $\nabla^*\Phi(a)=\varphi(a)-\psi(a)$, the right hand side of previous equation is $d_D(\varphi,\psi)$, which proves that
$W(\varphi,\psi)\geq d_D(\varphi,\psi)$.
The inf is attained on any extension $\Phi\in B^*$ of \eqref{eq:infmin} with the same norm, thus it is a minimum.
\end{proof}

\begin{prop}
Let $(A,H,D)$ be a spectral triple and $\varphi,\psi\in S(\bar{A})$ two states with $d_D(\varphi,\psi)=\infty$. Then, there is no $\Phi\in B^*$ satisfying $\nabla^*\Phi=\varphi_0-\psi_0$.
\end{prop}

\begin{proof}
Suppose $d_D(\varphi,\psi)=\infty$ and assume that there exists $\Phi\in B^*$ satisfying the side condition in \eqref{eq:two}. Then its restriction $\Phi_0$ to $\mathrm{Im}(\nabla)$ satisfies \eqref{eq:infmin} and from \eqref{eq:2} we deduce that $\Phi_0$ is unbounded, which is a contradiction since the norm of $\Phi_0$ should be bounded by the norm of $\Phi$.
\end{proof}

Assumming by convention that the inf \eqref{eq:two} on the empty set is $+\infty$, we have an equality between $W$ and $d_D$ for any pair of states.

\smallskip

It is worth noticing that, while \eqref{eq:two} is always a minimum, the supremum in \eqref{eq:oneA} is not always a maximum.

It is a maximum if the image of the set
$$
S:=\left\{a\in A^+, ||[D, a]||\leq 1\right\}
$$
under the quotient map $A^+\to A^+/\C 1$ is compact in the quotient topology induced by the norm topology on $A^+$.
This happens in particular if $A$ is finite dimensional and $\ker\nabla=\C1$: the latter condition implies that $L(a):= ||[D, a]||$ induces a norm on $A^+/\ker L=A^+/\C 1$, and $S/\C 1$ is the closed unit ball for this norm (which is compact since $A^+/\C 1$ is a finite-dimensional vector space). More generally, in the infinite-dimensional case and in the context of compact quantum metric spaces, the quotient unit ball is compact if $L$ is a \emph{closed Lip norm}, cf.~\cite[Theorem 5.2]{Rieffel:1999ec}. Since,
for all states $\varphi,\psi$, the difference $\varphi-\psi$ vanishes on constants as well, it defines a (continuous) function on $A^+/\C 1$. The spectral distance is the supremum of such a function on the closed unit ball $S/\C 1$, which is a maximum by Weierstrass theorem if the ball is compact.

As a corollary, if $S/\C 1$ is compact, then $d_D(\varphi,\psi)$ is finite for all $\varphi,\psi$ and the equality \eqref{eq:1} holds for any pair of states.

\section{Examples}
\subsection{Euclidean spaces}
In this section we recover Beckmann's
formula for the Wasserstein distance  in $\R^n$ from
  formula  \eqref{eq:two}. More precisely we show that the r.h.s. of the latter coincides with Beckmann's
  formula (given in \eqref{eq:BP} below).

Beckmann's
  formula deals with real valued functions, so first of all we need to check that the infimum in \eqref{eq:two} can be
  equivalently searched on real valued $\Phi$. This follows
  from the observation \cite{Iochum:2001fv} that
the supremum in the distance formula \eqref{eq:oneA} can be
equivalently searched on the set $ A^{\mathrm{s.a.}}$ of selfadjoint
elements of $A$, namely
\begin{equation}
d_D(\varphi_1,\varphi_2)= \sup_{a\in A^{\mathrm{s.a.}}}\big\{\varphi_1(a)-\varphi_2(a):\|[D,\pi(a)]\|\leq 1\big\}. 
\label{eq:oneB}
\end{equation}
Indeed, let $\nabla_{\R}$ be the restriction of
\eqref{eq:nabla} to the real vector space $A^{\mathrm{s.a.}}$. Take $B_{\R}$ to be any real
Banach subspace of $\mathcal{B}(H)$ containing
$\mathrm{Im}(\nabla_{\R})$, denote by $B^*_{\R}$ its dual and
$\nabla_{\R}^*:B^*_{\R}\to\mathcal{L}(A^{\mathrm{s.a.}},\R)$ the
pullback of $\nabla_\R$. Then, starting from \eqref{eq:oneB} and repeating almost
verbatim the proof of the above theorems, using the Hahn-Banach
theorem for real Banach spaces, one arrives at the following real version of Theorem \ref{thm:2}.
\begin{thm}
\label{thm:4}
Let $(A,H,D)$ be a spectral triple and 
$\varphi,\psi$ two states such that $d_D(\varphi,\psi)<\infty$. Then 
\begin{equation}\label{eq:dualbis}
d_D(\varphi,\psi)=\inf_{\Phi\in B_{\R}^*}\big\{\|\Phi\|:
    \nabla_{\R}^*\Phi=\varphi_0-\psi_0\big\}
\end{equation}
\end{thm}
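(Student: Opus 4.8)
The plan is to reduce Theorem \ref{thm:4} to Theorem \ref{thm:1bis} essentially mechanically, once the passage from complex to real scalars is justified. First I would record the key input \eqref{eq:oneB}: since $[D,a^*]=[D,a]^*$ and $\|T^*\|=\|T\|$, while $|\varphi(a)-\psi(a)|$ can be turned into a real difference by multiplying $a$ by a phase, the supremum defining $d_D$ is unchanged if restricted to $A^{\mathrm{s.a.}}$. Concretely, writing $a=a_1+ia_2$ with $a_1,a_2$ selfadjoint, one has $\varphi(a)-\psi(a)=(\varphi-\psi)(a_1)+i(\varphi-\psi)(a_2)$ with both summands real (states are real on selfadjoint elements), and $\max(\|[D,a_1]\|,\|[D,a_2]\|)\le\|[D,a]\|$ because $[D,a_j]$ is obtained from $[D,a]$ by a bounded operation of norm $\le1$; so at least one of $a_1,a_2$ achieves at least $1/\sqrt2$ of the ratio, and a standard optimization argument (or simply replacing $a$ by $e^{i\theta}a$ first) upgrades this to full equality. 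This is the observation attributed to \cite{Iochum:2001fv}, and I would cite it rather than belabor it.

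Next I would observe that $\Omega^1_D(A)^{\mathrm{s.a.}}$, hence $\mathrm{Im}(\nabla_\R)$, consists of selfadjoint bounded operators, so it makes sense to take $B_\R$ inside the real Banach space $\mathcal{B}(H)^{\mathrm{s.a.}}$; then $B_\R^*$ is a genuine real Banach dual and $\nabla_\R^*\colon B_\R^*\to\mathcal L(A^{\mathrm{s.a.}},\R)$ is well-defined as stated. The core of the proof is then a verbatim transcription of the proof of Theorem \ref{thm:2}/\ref{thm:1bis}: set $B_{0,\R}=\mathrm{Im}(\nabla_\R)$ and define $\Phi_{0,\R}(\nabla_\R a):=\varphi_0(a)-\psi_0(a)$ for $a\in A^{\mathrm{s.a.}}$, which is well-defined because by Lemma \ref{lemma:1} (applied with real $a$, noting $\ker\nabla_\R\subseteq\ker\nabla$) $\varphi-\psi$ kills $\ker\nabla_\R$ whenever $d_D(\varphi,\psi)<\infty$; its norm equals $d_D(\varphi,\psi)$ by \eqref{eq:oneB}; the real Hahn-Banach theorem extends it to some $\Phi\in B_\R^*$ of the same norm, giving $\inf\le d_D$; and the reverse inequality follows exactly as before from $\|\Phi\|\ge\sup_{b\in B_{0,\R}}|\Phi(b)|/\|b\|$ together with the side condition $\nabla_\R^*\Phi=\varphi_0-\psi_0$.

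The only point that needs a word of care — and the step I expect to be the mild obstacle — is making sure the real Hahn-Banach theorem actually applies, since the version quoted in the complex proof is the complex one: one must invoke the real form (extension of a bounded real-linear functional from a subspace of a real normed space, dominated by a norm, preserving the norm), which is the classical Hahn-Banach theorem and is unproblematic. Everything else is a direct translation. Accordingly I would simply write:

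\begin{proof}
By \eqref{eq:oneB} the supremum defining $d_D(\varphi,\psi)$ is attained on $A^{\mathrm{s.a.}}$, where $\varphi_0-\psi_0$ is real-valued; equivalently, $\|\Phi_{0,\R}\|=d_D(\varphi,\psi)$ for the real-linear map $\Phi_{0,\R}\colon\mathrm{Im}(\nabla_\R)\to\R$, $\Phi_{0,\R}(\nabla_\R a):=\varphi_0(a)-\psi_0(a)$. This map is well-defined: if $a\in A^{\mathrm{s.a.}}$ and $[D,a]=0$ then $a\in\ker\nabla$, and since $d_D(\varphi,\psi)<\infty$, Lemma \ref{lemma:1} gives $\varphi_0(a)=\psi_0(a)$. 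By the (real) Hahn-Banach theorem it extends to some $\Phi\in B_\R^*$ with $\|\Phi\|=d_D(\varphi,\psi)$ and $\nabla_\R^*\Phi=\varphi_0-\psi_0$, so the infimum in \eqref{eq:dualbis} is $\le d_D(\varphi,\psi)$ and is attained. Conversely, for any $\Phi\in B_\R^*$ with $\nabla_\R^*\Phi=\varphi_0-\psi_0$,
\begin{equation*}
\|\Phi\|=\sup_{b\in B_\R\setminus\{0\}}\frac{|\Phi(b)|}{\|b\|}\geq
\sup_{a\in A^{\mathrm{s.a.}}:\nabla_\R a\neq 0}\frac{|\varphi_0(a)-\psi_0(a)|}{\|[D,a]\|}=d_D(\varphi,\psi),
\end{equation*}
the last equality by \eqref{eq:oneB}. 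Hence the infimum equals $d_D(\varphi,\psi)$.
\end{proof}
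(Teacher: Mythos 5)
Your proposal is correct and is essentially the paper's own argument: the paper's proof of Theorem \ref{thm:4} consists precisely in starting from \eqref{eq:oneB}, restricting $\nabla$ to $A^{\mathrm{s.a.}}$, and repeating the proof of Theorems \ref{thm:2}/\ref{thm:1bis} verbatim with the real Hahn--Banach theorem, which is exactly what you do (including the use of Lemma \ref{lemma:1} in place of connectedness). One small slip in your setup paragraph, not used in the actual proof: for $a=a^*$ and $D$ selfadjoint, $[D,a]$ is \emph{anti}-selfadjoint, so $\mathrm{Im}(\nabla_{\R})$ does not lie in $\mathcal{B}(H)^{\mathrm{s.a.}}$; one should simply take $B_{\R}$ to be any real Banach subspace of $\mathcal{B}(H)$ containing $\mathrm{Im}(\nabla_{\R})$, as the paper does, and your proof body goes through unchanged.
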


We now specialize to the canonical spectral triple of 
$M=\R^n$. In such a case, $H=L^2(\R^n)\otimes\R^k$, where $k=2^{[n/2]}$ is the dimension of the spin representation, $D$ is the (flat) Dirac operator and $\nabla f=-i\sum_{\alpha=1}^n\gamma^\alpha\partial_\alpha f$ for all $f\in C_c^\infty(\R^n)$, with $\gamma^1,\ldots,\gamma^n$ Dirac's gamma matrices. Let $V\subset M_k(\C)$ be the real vector subspace of complex $k\times k$ matrices spanned by $i\gamma^1,\ldots,i\gamma^n$, equipped with Hilbert-Schmidt inner product:
\begin{equation}
\inner{a,b}_{\mathrm{HS}}:=\frac{1}k\tr(a^*b)\;,\qquad\forall\;a,b\in M_k(\C).
\end{equation}
With such a normalization, gamma matrices
$i\gamma^\alpha$ form an orthonormal basis of $V$:
\begin{equation}
\inner{i\gamma^\alpha, i\gamma^\beta}_{\mathrm{HS}}=\frac{1}{k}\tr(\gamma^\alpha\gamma^\beta)=\frac{1}{2k}\tr(\gamma^\alpha\gamma^\beta+\gamma^\beta\gamma^\alpha)=\frac{1}{k}\tr(\delta^{\alpha\beta}\mathbb I)=\delta^{\alpha\beta},
\end{equation}
with $\mathbb  I$ the identity matrix of dimension $k$.
The image of $\nabla_{\R}$ is contained in the Banach space
$B_{\R}:=C_0(\R^n,V)$ of continuous functions $\R^n\to V$ vanishing at
infinity, with norm inherited from the operator norm on $H$. Any $b\in
B_{\R}$ has the form $b=i\sum_{\alpha}f_\alpha\gamma^\alpha$, where
$\boldsymbol{f}:=(f_1,\ldots,f_n)$ is an $n$-tuple of real-valued
$C_0$-functions on $\R^n$. Since the operator norm is a $C^*$-norm
and 
\begin{equation}
b^*b=\sum_{\alpha,\beta}f_\alpha f_\beta\gamma^\alpha\gamma^\beta
=\frac{1}{2}\sum_{\alpha,\beta}f_\alpha
f_\beta(\gamma^\alpha\gamma^\beta+\gamma^\beta\gamma^\alpha)=\sum_\alpha
|f_\alpha|^2\mathbb I=|\boldsymbol{f}|^2 \mathbb I
\end{equation}
the norm on $B_{\R}$ is just the Euclidean norm in $\R^n$ composed with the sup norm:
\begin{equation}\label{eq:bnorm}
\|b\|=\sup_{x\in\R^n}|\boldsymbol{f}(x)|.
\end{equation}
By Riesz Representation Theorem (see e.g.~\cite[Theorem 6.19]{Rudin:1970fk}) the dual is $B_{\R}^*=\mathfrak{M}(\R^n)\otimes V^*$, with $\mathfrak{M}(\R^n)$ the set of (real-valued) Radon measures on $\R^n$. Every $\Phi\in B_{\R}^*$ can then be expressed in the form
\begin{equation}
\Phi(\,.\,)=i\sum_\alpha\int_{\R^n}\inner{\gamma^\alpha,\,.\,}_{\mathrm{HS}}d w_\alpha
\end{equation}
for some Radon measures $w_1,\ldots,w_n$. Given two states $\varphi,\psi$ corresponding to two measures $\mu,\nu$, the side condition $\nabla_{\R}^*\Phi=\varphi_0-\psi_0$ in \eqref{eq:dualbis} becomes:
\begin{equation}\label{eq:wcond}
\int_{\R^n} \boldsymbol{\nabla}f\cdot d\boldsymbol{w}=
\int_{\R^n} (f\,d\mu-f\,d\nu) \;,\qquad\forall\;f\in C^\infty_c(\R^n),
\end{equation}
where $d\boldsymbol{w}=(dw_1,\ldots,dw_n)$, $\boldsymbol{\nabla}=(\partial_1,\ldots,\partial_n)$ and ``$\cdot$'' is the standard Euclidean scalar product.
In particular, if $d\boldsymbol{w}=\boldsymbol{w}(x)d^nx$, $d\mu=\mu(x)d^nx$ and
$d\nu=\nu(x)d^nx$ for some $C^1$ functions, after integration by parts one gets:
\begin{equation}
\int_{\R^n} f\,\big\{\boldsymbol{\nabla}\cdot\boldsymbol{w}+\mu-\nu\big\}d^nx=0
\qquad\forall\;f\in C^\infty_c(\R^n).
\end{equation}
Since $C_c^\infty(\R^n)$
is dense in $C_0(\R^n)$,
the latter condition is equivalent to
\begin{equation}
\label{eq_w}
-\boldsymbol{\nabla}\cdot\boldsymbol{w}=\mu-\nu \;.
\end{equation}
For more general Radon measures, the divergence condition \eqref{eq_w} is to be read in the weak sense, i.e.~as in \eqref{eq:wcond}.

The norm on $B_{\R}^*$, dual to \eqref{eq:bnorm}, is
\begin{equation}
\label{eq:normphi}
    \|\Phi\|=\int_{\R^n} |\boldsymbol{w}(x)|d^nx .
\end{equation}
Combining \eqref{eq:normphi} and \eqref{eq_w}, after a replacement
$\boldsymbol{w}\to -\boldsymbol{w}$, Theorem~\ref{thm:2} yields 
\begin{equation}\label{eq:BP}
d_D(\varphi,\psi)=\min \left\{\,
\int_{\R^n} |\boldsymbol{w}(x)|d^nx  \;:\;\boldsymbol{\nabla}\cdot\boldsymbol{w}=\mu-\nu
\,\right\} \;.
\end{equation}
The r.h.s. of
the above equation is precisely Beckmann's formula (4.4) in
\cite{San15}. So we have recovered the later from the dual formulation of the
  spectral distance, as expected from Theorem \ref{thm:2}.

\begin{ex}
In simple cases, combining the formulas \eqref{eq:BP} and \eqref{eq:oneA} one is able to explicitly compute the distance. For $n=1$, for example, the difference between the cumulative distributions
\begin{equation}\label{eq:cum}
w(x):=\int_{-\infty}^xd(\mu-\nu)
\end{equation}
satisfies the side condition in \eqref{eq:BP}, hence we immediately get
\begin{equation}\label{eq:26b}
d_D(\varphi,\psi)\leq \int_{-\infty}^{+\infty}\left|\int_{-\infty}^xd(\mu-\nu)\right|dx \;.
\end{equation}
We know on the other hand that the sup in \eqref{eq:oneA} can be searched, in the commutative case, among the set of all $1$-Lipschitz functions (see e.g.~\cite{dAndrea:2009xr}). If $\mu-\nu$ has no singular continuous part, then \eqref{eq:cum} is piecewise continuous, its sign
is piecewise continuous and the primitive of the sign is a $1$-Lipschitz functions $\phi$. From \eqref{eq:oneA} it follows that:
\begin{equation}
d_D(\varphi,\psi)\geq\left|
\int_{-\infty}^{+\infty} \phi\,d(\mu-\nu)\right|=\int_{-\infty}^{+\infty}\left|\int_{-\infty}^xd(\mu-\nu)\right|dx
\end{equation}
so that \eqref{eq:26b} is an equality.
\end{ex}

\subsection{Finite noncommutative spaces}
In this section  we show how to recover the ``matricial distance'' of
\cite{chen2017} (more precisely their Theorem 1), which has been the
motivating example of the present work. To this aim, we consider the spectral triple
\begin{equation}\label{eq:29}
A=M_n(\C),\qquad
H=\C^n\otimes\C^N ,\qquad
D=\sum_{i=1}^NL_i\otimes E_{ii}
\end{equation}
where $n,N\geq 1$, the algebra $A$ acts on the first factor of $H$,
$L_1,\ldots,L_N\in M_n(\C)$ are selfadjoint matrices and $E_{ij}\in M_N(\C)$ is the matrix with $1$ in position $(i,j)$ and zero everywhere else.

Recall that states $\varphi$ are in bijection with density matrices $\rho\in M_n(\C)$, via the formula:
\begin{equation}
\varphi(a)=\tr(\rho a) \;,\qquad\forall\;a\in A.
\end{equation}
Choose the Banach space $B$ in \eqref{eq:two} as follows:
\begin{equation}
B=\underbrace{M_n(\C)\oplus\ldots\oplus M_n(\C)}_{N\text{ times}}
\end{equation}
where we think of elements of $B$ as block diagonal matrices with $N$ blocks of type $n\times n$.
The formula
\begin{equation}
\Phi=\sum_{i=1}^N\tr_{\C^n}(u_i\,\cdot\,)
\end{equation}
gives a bijection between $u=(u_1,\ldots,u_N)\in B$ and $\Phi\in B^*$. From
the cyclic property of the trace we deduce that
\begin{equation}
\nabla^*\Phi(a)=
\Phi([D,a])=
\sum_{i=1}^N\tr_{\C^n}(u_i[L_i,a])=
-\sum_{i=1}^N\tr_{\C^n}([L_i,u_i]a)
\end{equation}
for all $a\in A$, so that the side condition in \eqref{eq:two} becomes
\begin{equation}
-\sum\nolimits_{i=1}^N[L_i,u_i]=\rho_1-\rho_2
\end{equation}
for two states with density matrices $\rho_1$ and $\rho_2$.

The dual of the operator norm in the finite-dimensional case is the nuclear norm $\|\,.\,\|_*$:
\begin{equation}
\|\Phi\|=\|u\|_*:=\tr(\sqrt{u^*u}) \;.
\end{equation}
Collecting all these informations, and since $u$ and $-u$ have the
same norm, we arrive at the following theorem, specialization of our Theorem \ref{thm:2} to the present case.

\begin{prop}\label{thm:3}
The spectral distance of the triple \eqref{eq:29} is given by
\begin{equation}\label{eq:35}
d_D(\varphi_1,\varphi_2)=\inf\left\{ \|u\|_*:
\sum_{i=1}^N[L_i,u_i]=\rho_1-\rho_2
\right\} \;,
\end{equation}
where $\varphi_i=\tr(\rho_i\,\cdot\,)$, $i=1,2$, are any two states at finite distance and the infimum is over all
\mbox{$u=(u_1,\ldots,u_N)\in M_n(\C)\oplus\ldots\oplus M_n(\C)$} satisfying the side condition.
\end{prop}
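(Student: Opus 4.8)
The plan is to obtain Proposition~\ref{thm:3} as a direct corollary of Theorem~\ref{thm:2} (or rather Theorem~\ref{thm:1bis}, since we only claim it for states at finite distance), by carrying out the dictionary described in the paragraphs immediately preceding the statement and checking that every identification used there is in fact correct. First I would verify that the spectral triple \eqref{eq:29} is a legitimate spectral triple for which Theorem~\ref{thm:1bis} applies: since $H$ is finite-dimensional, boundedness of $[D,a]$ and compactness of $a(1+D^2)^{-1}$ are automatic, and the hypothesis on the two states is exactly ``$d_D(\varphi_1,\varphi_2)<\infty$'', which is part of the statement; note that we do \emph{not} need the connectedness condition because we invoke Theorem~\ref{thm:1bis} rather than Theorem~\ref{thm:2}. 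Next I would check that $\mathrm{Im}(\nabla)\subseteq B$ with $B=M_n(\C)^{\oplus N}$ viewed as block-diagonal operators on $H=\C^n\otimes\C^N$: one computes $[D,a]=\sum_{i=1}^N [L_i,a]\otimes E_{ii}$ for $a\in M_n(\C)$ (and the same with an added scalar, since $[D,1]=0$, so the computation extends to $A^+$), which visibly lies in the span of the $E_{ii}$-blocks, hence in $B$.

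Then I would establish the three identifications that turn \eqref{eq:two} into \eqref{eq:35}. The first is the parametrization $B^*\cong B$ via $\Phi\mapsto u=(u_1,\dots,u_N)$ with $\Phi(b)=\sum_i \tr_{\C^n}(u_i b_i)$; this is just the standard fact that the dual of a finite-dimensional matrix space is itself under the trace pairing, and the bijection is the one written in the excerpt. The second is the computation of $\nabla^*\Phi$: using $[D,a]=\sum_i[L_i,a]\otimes E_{ii}$ and the cyclicity of the trace one gets $\nabla^*\Phi(a)=\sum_i\tr_{\C^n}(u_i[L_i,a])=-\sum_i\tr_{\C^n}([L_i,u_i]a)$, so that the side condition $\nabla^*\Phi=\varphi_{1,0}-\psi_{1,0}$ (writing $\varphi_j(a)=\tr(\rho_j a)$) reads $-\sum_i[L_i,u_i]=\rho_1-\rho_2$; after the sign change $u\mapsto -u$ this becomes $\sum_i[L_i,u_i]=\rho_1-\rho_2$, matching \eqref{eq:35}. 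The third is the identification of the dual norm: the norm on $B$ inherited from $\mathcal B(H)$ is the operator norm on block-diagonal matrices, i.e.\ $\|b\|=\max_i\|b_i\|_{\mathrm{op}}$, and its dual with respect to the trace pairing is the nuclear (trace-class) norm $\|u\|_*=\sum_i\tr(\sqrt{u_i^*u_i})$; this is the classical duality between operator norm and trace norm on matrix algebras, applied blockwise. Plugging these three facts into \eqref{eq:two} and using that $\|u\|_*=\|-u\|_*$ yields exactly \eqref{eq:35}, and the ``$\inf$'' is attained (it is a ``$\min$'') by the corresponding clause of Theorem~\ref{thm:1bis}.

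There is essentially no hard step here; the content is bookkeeping. The one point deserving a line of care is the claim that the dual norm on block-diagonal matrices is the blockwise nuclear norm: one should note that for a direct sum the operator norm is the max of the block operator norms, hence the dual norm is the sum (an $\ell^1$-over-blocks of the blockwise duals) of the block trace norms, which is precisely $\tr(\sqrt{u^*u})$ computed for the block-diagonal matrix $u$. I would also remark, for completeness, that the side condition being solvable at all is guaranteed abstractly by part (i) of Theorem~\ref{thm:2} once the states are at finite distance (equivalently, one checks directly that $\rho_1-\rho_2$ is traceless and orthogonal to $\ker\nabla$, so it lies in the image of $a\mapsto\sum_i[L_i,u_i]$ after the pairing), but since we are quoting Theorem~\ref{thm:1bis} this comes for free. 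I would close by noting that this recovers Theorem~1 of \cite{chen2017}.

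\begin{proof}
As observed above, \eqref{eq:29} defines a spectral triple: $H$ is finite-dimensional, so $[D,a]$ is bounded and $a(1+D^2)^{-1}$ is compact for every $a$. We apply Theorem~\ref{thm:1bis} to the two states $\varphi_1,\varphi_2$, which are assumed at finite distance, with the Banach space $B=M_n(\C)^{\oplus N}$ realized as block-diagonal operators on $H=\C^n\otimes\C^N$. For $a\in A^+=M_n(\C)+\C 1$ one has $[D,a]=\sum_{i=1}^N[L_i,a]\otimes E_{ii}$, which lies in $B$, so indeed $\mathrm{Im}(\nabla)\subseteq B$.

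The trace pairing $\Phi(b)=\sum_{i=1}^N\tr_{\C^n}(u_i b_i)$ identifies $B^*$ with $B$ via $\Phi\leftrightarrow u=(u_1,\dots,u_N)$. For $b=b_1\oplus\cdots\oplus b_N\in B$ the operator norm inherited from $\mathcal B(H)$ is $\|b\|=\max_i\|b_i\|_{\mathrm{op}}$, and the norm on $B^*$ dual to this is the sum of the blockwise trace norms, i.e.\ $\|\Phi\|=\sum_{i=1}^N\tr\bigl(\sqrt{u_i^*u_i}\bigr)=\|u\|_*$, the nuclear norm of the block-diagonal matrix $u$.

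By cyclicity of the trace,
\begin{equation}
\nabla^*\Phi(a)=\Phi([D,a])=\sum_{i=1}^N\tr_{\C^n}(u_i[L_i,a])=-\sum_{i=1}^N\tr_{\C^n}([L_i,u_i]a)\qquad\forall\,a\in A,
\end{equation}
so, writing $\varphi_j(a)=\tr(\rho_j a)$, the side condition $\nabla^*\Phi=\varphi_{1,0}-\varphi_{2,0}$ of \eqref{eq:two} reads $-\sum_{i=1}^N[L_i,u_i]=\rho_1-\rho_2$. Substituting these identifications into \eqref{eq:two}, Theorem~\ref{thm:1bis} gives
\begin{equation}
d_D(\varphi_1,\varphi_2)=\min\left\{\|u\|_*:-\sum_{i=1}^N[L_i,u_i]=\rho_1-\rho_2\right\}.
\end{equation}
Since $\|u\|_*=\|-u\|_*$, replacing $u$ by $-u$ turns the constraint into $\sum_{i=1}^N[L_i,u_i]=\rho_1-\rho_2$ without changing the value, which is \eqref{eq:35}. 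This recovers Theorem~1 of \cite{chen2017}.
\end{proof}
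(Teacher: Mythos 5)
Your proposal is correct and follows essentially the same route as the paper, whose ``proof'' is precisely the bookkeeping laid out before the statement: the choice $B=M_n(\C)^{\oplus N}$, the trace pairing $B^*\cong B$, the cyclicity computation giving the side condition $-\sum_i[L_i,u_i]=\rho_1-\rho_2$, the operator-norm/nuclear-norm duality, and the sign change $u\mapsto-u$, all fed into the general dual formula. Your additional checks (that $\mathrm{Im}(\nabla)\subseteq B$, that the dual of the blockwise max operator norm is the blockwise sum of trace norms, and the invocation of Theorem~\ref{thm:1bis} rather than Theorem~\ref{thm:2} to avoid the connectedness hypothesis) are just careful spellings-out of what the paper leaves implicit.
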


Using \eqref{eq:dualbis} instead of \eqref{eq:two} one can show that the inf in \eqref{eq:35} can be searched in the set of antisymmetric matrices, thus getting the same formula that is in \cite{chen2017}.
\medskip

We close this section with a simple (well-known) example where the sup and inf formula combined allow to easily compute the spectral distance.

\begin{ex}
Consider the spectral triple \eqref{eq:29} with $n=2$, $N=1$ (hence $H=\C^2$) and $D=L_1:=\bigg(\!\begin{array}{rr}1 & 0 \\ 0 & -1\end{array}\!\bigg)$.
The correspondence
$$
\R^3\ni \vec x=(x_1,x_2,x_3)\mapsto \rho_{\vec x}:=\frac{1}{2}\bigg(\!\begin{array}{cc}1+x_3 & x_1-\mathrm{i}x_2 \\ x_1+\mathrm{i}x_2 & 1-x_3\end{array}\!\bigg)
$$
maps the closed unit ball into density matrices. We will denote by $\varphi_{\vec x}$ the state with density matrix $\rho_{\vec x}$.

Let $\vec x\neq\vec y$ be in the closed unit ball. Assume first that $x_3=y_3$. Define
$$
u=u_1:=\frac{1}{4}\bigg(\!\begin{array}{rc}0 & \overline{z} \\ \!-z & 0\end{array}\!\bigg)\;,\qquad
a:=\frac{1}{2|z|}\bigg(\!\begin{array}{cc}0 & \overline{z} \\ z & 0\end{array}\!\bigg)
$$
where $z:=x_1-y_1+\mathrm{i}(x_2-y_2)$ and $z\neq 0$ (since $\vec x\neq\vec y$). Both $u$ and $a$ satisfy the side conditions in \eqref{eq:oneA} and \eqref{eq:35}, so that
$$
 |\varphi_{\vec x}(a)-\varphi_{\vec y}(a)|\leq d_D(\varphi_{\vec x},\varphi_{\vec y})\leq \|u\|_* \;.
$$
Since lower and upper bound are both equal to $\frac{1}{2}|z|$, the distance is computed:
$$
d_D(\varphi_{\vec x},\varphi_{\vec y})=\frac{1}{2}\|\vec x-\vec y\| \;.
$$
If $x_3\neq y_3$, on the other hand, one easily checks that the distance is infinite.
\end{ex}


\begin{thebibliography}{9}

\bibitem{chen2017} Y. Chen, T.T. Georgiou, L. Ning and A. Tannenbaum,
  \textit{Matricial Wasserstein-1 Distance}, IEEE Control Systems
  Letters \textbf{1} (2017).

\bibitem{Connes:1994kx}
A.~Connes, \emph{Noncommutative geometry}, Academic Press, 1994.

\bibitem{connesreconstruct}
A.~Connes, \emph{On the spectral characterization of manifolds}, J. Noncom.
  Geom. \textbf{7} (2013), no.~1, 1--82.

\bibitem{Connes:1996fu}
A.~Connes, \emph{Gravity coupled with matter and the foundations of
  noncommutative geometry}, Commun. Math. Phys. \textbf{182} (1996), 155--176.

\bibitem{Connes:1992bc}
A.~Connes and J.~Lott, \emph{The metric aspect of noncommutative geometry},
  Nato ASI series B Physics \textbf{295} (1992), 53--93.

\bibitem{dAndrea:2009xr}
F.~D'Andrea and P.~Martinetti, \emph{A view on optimal transport
  from noncommutative geometry}, SIGMA \textbf{6} (2010), no.~057, 24 pages.

\bibitem{D15}
  F.~D'Andrea, \textit{Pythagoras Theorem in Noncommutative Geometry},
	in ``Noncommutative Geometry and Optimal Transport'',
	Contemporary Mathematics {\bf 676} (AMS, 2016), pp.~175--210.

\bibitem{GVF}
J.M. Gracia-Bond{\'\i}a, J.C. Varilly and H. Figueroa, \textit{Elements of noncommutative geometry}, Birkh{\"a}user, 2001.

\bibitem{Iochum:2001fv}
B.~Iochum, T.~Krajewski, and P.~Martinetti.
\textit{Distances in finite spaces from noncommutative geometry},
J. Geom. Phy. {\bf 31} (2001), 100--125.

\bibitem{Martinetti:2009kx}
P. Martinetti, \emph{Line element in quantum gravity: the examples of dsr
  and noncommutative geometry}, Int. J. Mod. Phys. A \textbf{24} (2009),
  no.~15, 2792--2801.

\bibitem{Martinetti:2012fk}
P.~Martinetti, \emph{Towards a {M}onge-{K}antorovich distance in noncommutative
  geometry}, Zap. Nauch. Semin. POMI \textbf{411} (2013).

\bibitem{Martinetti:2017}
P.~Martinetti, \emph{Connes distance and optimal transport},
J. Phys. Conf. Series {\bf 968} (2018), 012007.

\bibitem{Martinetti:2011fk}
P.~Martinetti, F.~Mercati, and L.~Tomassini, \emph{Minimal length in quantum
  space and integrations of the line element in noncommutative geometry}, Rev.
  Math. Phys. \textbf{24} (2012), no.~5, 1250010--36 pages.

\bibitem{Rieffel:1999wq}
M.A. Rieffel, \emph{Metrics on states from actions of compact groups},
  Documenta Math.\textbf{3} (1998), 215--229.

\bibitem{Rieffel:1999ec}
M.A. Rieffel, \emph{Metric on state spaces}, Documenta Math. \textbf{4}
  (1999), 559--600.

\bibitem{Rudin:1970fk}
W.~Rudin, \emph{Real and complex analysis}, McGraw-Hill, 1987.

\bibitem{RS80}
M.~Reed and B.~Simon, \textit{Methods of Modern Mathematical Physics I}, Academic Press, 1980.

\bibitem{San15}
F. Santambrogio, \textit{Optimal Transport for Applied Mathematicians}, PNLDE 87, Birk{\"a}user (2015).


\end{thebibliography}
\end{document}